\def\arXiv#1{arXiv:\href{http://arXiv.org/abs/#1}{#1}}
\def\MR#1#2{\href{http://www.ams.org/mathscinet-getitem?mr=#1}{MR#1}}
\newtheorem{theorem}{Theorem}
\newtheorem{lemma}[theorem]{Lemma}
\newtheorem{proposition}[theorem]{Proposition}
\numberwithin{theorem}{section}
\theoremstyle{definition}
\newtheorem{example}[theorem]{Example}
\theoremstyle{remark}
\newtheorem{remark}[theorem]{Remark}
\numberwithin{equation}{section}
\def\theenumi{\@roman\c@enumi}
\newcommand{\Z}{\mathbb{Z}}
\newcommand{\Q}{{\mathbb Q}}
\newcommand{\C}{{\mathbb C}}
\newcommand{\F}{{\mathbb F}}
\newcommand{\Qbar}{{\kern.1ex\overline{\kern-.1ex\Q\kern-.1ex}\kern.1ex}}
\newcommand{\E}{{\mathcal E}}
\let\P\Proj
\newcommand{\End}{\operatorname{End}}
\newcommand{\NS}{\operatorname{NS}}
\newcommand{\Km}{\mathit{Km}}
\newcommand{\rank}{\operatorname{rank}}
\newcommand{\II}{\mathrm{II}}
\newcommand{\III}{\mathrm{III}}
\newcommand{\I}{\mathrm{I}}
\newcommand{\IV}{\mathrm{IV}}
\newcommand{\kbar}{{\bar k}}
\newcommand{\sM}{\mathcal{M}}
\newcommand{\sA}{\mathcal{A}}
\DeclareMathOperator{\MW}{MW}
\DeclareMathOperator{\Frob}{Frob}
\def\K3{$K3$}
\title[Inose's construction and elliptic \K3 surfaces with rank~$15$]
{Inose's construction and elliptic \K3 surfaces with Mordell-Weil rank~$15$ revisited}
\author{Abhinav Kumar} 
\address{Department of Mathematics, Stony Brook University, Stony Brook, NY 11794}
\email{thenav@gmail.com}
\author{Masato Kuwata}
\address{Faculty of Economics, Chuo University, 742-1 Higashinakano, 
Hachioji-shi, Tokyo 192-0393, Japan}
\email{kuwata@tamacc.chuo-u.ac.jp}
\subjclass[2010]{Primary 14J27, 14J28; Secondary 14H40, 11G10}
\date{May 18, 2018}
\keywords{Elliptic \K3 surface, Jacobian, Kummer surface, Inose fibration}
\begin{document}

\begin{abstract}
We describe two constructions of elliptic \K3 surfaces starting from the Kummer surface of the Jacobian of a genus $2$ curve. These parallel the base-change constructions of Kuwata for the Kummer surface of a product of two elliptic curves. One of these also involves the analogue of an Inose fibration. We use these methods to provide explicit examples of elliptic \K3 surfaces over the rationals of geometric Mordell-Weil rank $15$.
\end{abstract}

\maketitle

\section{Introduction}

Elliptic \K3 surfaces have been the focus of much work in algebraic geometry and number theory over the last few decades. For such a surface over a field of characteristic~$0$ with at least one singular fiber, the rank of the Mordell-Weil group can be as large as~$18$. Cox \cite{Cox} proved in 1989, using the surjectivity of the period map for \K3 surfaces, that any integer between~$0$ and~$18$  actually occurs as the rank of the Mordell-Weil group of an elliptic \K3 surface over $\C$. However, the proof uses transcendental methods, and does not lead to an algebraic construction of examples of elliptic \K3 surfaces of a given rank. 

In 2001 Kuwata constructed explicit examples of elliptic \K3 surfaces over $\Q$ whose Mordell-Weil group over $\Qbar$ has rank~$r$, where $0\le r\le 18$ and $r\neq 15$ (\cite{Kuwata:MW-rank}). The proof uses Inose's theorem (see Proposition~\ref{prop:Inose}), which relies on a transcendental argument.  For this reason, it was not clear how to find explicit generators of the Mordell-Weil group of such elliptic surfaces until our recent work \cite{Kumar-Kuwata:singular}, where we found a systematic method to describe a set of generators (see also \cite{Kloosterman-explicit} for earlier progress on this question).  Meanwhile, there have been constructions of elliptic \K3 surfaces with Mordell-Weil rank~$15$ (\cite{Kloosterman-rank15}\cite{Top-de Zeeuw}) in order to fill the gap of~\cite{Kuwata:MW-rank}.  However, these constructions are based on somewhat different ideas from that of \cite{Kuwata:MW-rank}, to which the method of \cite{Kumar-Kuwata:singular} does not seem to apply to find generators of the Mordell-Weil group.

The goal of this article is to describe some new constructions of elliptic \K3 surfaces with high Mordell-Weil rank using ideas closely related to the idea of \cite{Kuwata:MW-rank}. The construction of \cite{Kuwata:MW-rank} is based on the Kummer surfaces $\Km(E_{1}\times E_{2})$ associated with the product of two elliptic curves. For such a Kummer surface Shioda and Inose \cite{Shioda-Inose} constructed a double cover with certain properties (now called a Shioda-Inose structure). We say that a \K3 surface $S$ has a Shioda-Inose structure if $S$ admits an involution fixing the global $2$-form on $S$, such that the quotient is a Kummer surface $\Km(A)$, and provided that the rational quotient map $S \dashrightarrow \Km(A)$ induces a Hodge isometry $T_{S}(2) \simeq T_{\Km(A)}$ of (scaled) transcendental lattices. When $A=E_{1}\times E_{2}$ is the product of two elliptic curves, $S$ was constructed in \cite{Shioda-Inose} as an elliptic surface having two $\II^*$ fibers. Shioda and Inose used this construction to prove a beautiful theorem establishing a one-to-one correspondence between singular \K3 surfaces and even positive definite binary quadratic forms.  Later, Inose \cite{Inose:quartic} constructed $S$ explicitly as a quartic surface in $\P^{3}$ and as the quotient of a Kummer surface by an involution.  Thus, we have a chain of rational maps
\(S\dashrightarrow \Km(A)\dashrightarrow S\),
which we now call a ``Kummer sandwich'' (\cite{Shioda:Kummer-sandwich}). The construction of $\Km(E_1 \times E_2)\dashrightarrow S$ in \cite{Inose:quartic} can be viewed as a base change of elliptic surfaces, and it is this fact that \cite{Kuwata:MW-rank} generalized to construct elliptic \K3 surfaces of high rank. Elliptic \K3 surfaces of various Mordell-Weil ranks are constructed as a base change from $S$, which we call the Inose surface.

Our main idea in this article is to replace $A=E_{1}\times E_{2}$ by the Jacobian $J(C)$ of a curve $C$ of genus~$2$. We propose two different generalizations of Inose's fibration on $\Km(E_{1}\times E_{2})$. A detailed study of Shioda-Inose structures on \K3 surfaces was carried out by Morrison \cite{Morrison}. For Kummer surfaces of principally polarized abelian surfaces, double covers with Shioda-Inose structure were studied by Naruki \cite{Naruki} and Dolgachev \cite{Galluzzi-Lombardo-Dolgachev}. In \cite{Kumar:2008}, Kumar explicitly described these surfaces as a family of elliptic \K3 surfaces with $\II^*$ and $\III^*$ fibers, whose Weierstrass coefficients are related to invariants of the genus $2$ curve $C$. In this construction, the rational map $S\dashrightarrow \Km(A)$ in \cite{Kumar:2008} is a 2-isogeny between elliptic surfaces. The dual isogeny sets up the sandwich $S\dashrightarrow \Km(A)\dashrightarrow S$. Our first construction is based on the observation that $\Km(A)\dashrightarrow S$ can also be described as an Inose-style base change. More precisely, there is an elliptic fibration on the Kummer surface (fibration~13 in \cite{Kumar:2014}) which can be obtained from the Shioda-Inose surface in \cite{Kumar:2008} as a base change by a simple change of variables. In \S3 we give the details of this construction of elliptic \K3 surfaces of various Mordell-Weil ranks. In particular, we give two numerical examples that have Mordell-Weil rank~$15$.
 
Note that fibration~13 in \cite{Kumar:2014} has one $\IV^*$ fiber and one $\I_0^*$ fiber, while Inose's fibration on $\Km(E_{1}\times E_{2})$ has two $\IV^*$ fibers.  There does exist an elliptic fibration on $\Km(J(C))$ with two $\IV^*$ fibers, and it can be constructed from known fibrations in \cite{Kumar:2014} using the ``2-neighbor method'' of Elkies.  However, this fibration does not generically have a section. Nevertheless, taking its Jacobian fibration produces an elliptic fibration with section and the same fiber type. Taking a base change, we can construct elliptic \K3 surfaces with Mordell-Weil rank up to $18$.  In \S4 we give details of this construction, and give a numerical example with Mordell-Weil rank~$15$. 

In future work, we hope to find explicit generators for the Mordell-Weil groups of the elliptic \K3 surfaces constructed here, along the lines of \cite{Kumar-Kuwata:singular}.

\subsection{Acknowledgements}
We thank Matthias Sch\"utt and the referees for helpful comments on an
earlier version of the paper. Kumar was supported in part by NSF
CAREER grant DMS-0952486, and by a grant from the MIT Solomon
Buchsbaum Research Fund.  Kuwata was partially supported by JSPS
KAKENHI Grant Number JP26400023, and by the Chuo University Grant for
Special Research.  The computer algebra systems \texttt{Magma},
\texttt{sage}, \texttt{gp/PARI}, \texttt{Maxima} and \texttt{Maple}
were used in the calculations for this paper.

\section{Preliminaries}
By an elliptic surface we mean a smooth projective surface~$S$ together with a relatively minimal elliptic fibration $\pi:S\to C$ over a smooth projective curve $C$, all defined over a field~$k$, which we assume to be a number field throughout. We are interested in the case where $S$ is a \K3 surface, $C$ is isomorphic to $\P^{1}$ over $k$, and $\pi$ admits a section $\sigma_{0}:\P^{1}\to S$ also defined over $k$. Under these assumptions the generic fiber $\E$ of $\pi$ is an elliptic curve defined over the rational function field $k(t)$. We also assume there is at least one singular fiber (to ensure that the Mordell-Weil group is finitely generated). Denote by $\MW_{\kbar}(\E)$ the Mordell-Weil lattice $\E(\kbar(t))/\E(\kbar(t))_{\text{tors}}$. We refer the reader to \cite{Shioda:MWL} for the theory of Mordell-Weil lattices and \cite{Schuett-Shioda} for a broader overview of elliptic surfaces.

A theorem of Shioda and Tate connects the Mordell-Weil $\E(\kbar(t))$ group with the Picard group or the N\'eron-Severi group $\NS_{\kbar}(S)$ of $\E$ (note that linear equivalence and algebraic equivalence coincide). In particular, we have the Shioda-Tate formula (cf.~Shioda\cite{Shioda:elliptic-modular})
\begin{equation}\label{eq:Shioda-Tate}
\rank \NS_{\kbar}(S)= 2 + \rank \MW_{\kbar}(E) +\sum_{P\in C} (m_P-1),
\end{equation}
where $m_P$ is the number of (geometrically) irreducible components of
the fiber $\pi^{-1}(P)$.  We call~$\rho(S) = \rank \NS_{\kbar}(S)$ the (geometric) Picard number.

\begin{remark}
In the absence of a section we would say that $\pi$ is a genus one fibration. Sometimes in the literature these are called elliptic fibrations and those with section are labeled Jacobian elliptic fibrations.
\end{remark}

The following theorem of Inose is essential to our construction.

\begin{proposition}[Inose {\cite[Cor.~1.2]{Inose:quartic}}]\label{prop:Inose}
Let $S_{1}$ and $S_{2}$ be \K3 surfaces defined over a number field $k$ and $f:S_{1}\to S_{2}$ a rational map of finite degree.  Then $S_{1}$ and $S_{2}$ have the same Picard number.
\end{proposition}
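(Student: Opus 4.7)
The plan is to reduce the problem to a statement about the transcendental lattice, which behaves well under dominant rational maps thanks to the irreducibility of the Hodge structure on the transcendental part of a K3 surface. Since the statement is about geometric Picard numbers and characteristic zero, we fix an embedding $k \hookrightarrow \C$ and work over $\C$ throughout.

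First, I would resolve the indeterminacy of $f$ by a sequence of blow-ups $\pi \colon \widetilde{S}_1 \to S_1$, obtaining a genuine morphism $\widetilde{f} \colon \widetilde{S}_1 \to S_2$ of the same (finite) degree as $f$. The projection formula gives $\widetilde{f}_* \widetilde{f}^* = \deg(\widetilde{f}) \cdot \mathrm{id}$ on $H^2(S_2,\Q)$, so $\widetilde{f}^*\colon H^2(S_2,\Q) \to H^2(\widetilde{S}_1,\Q)$ is injective. Since $\widetilde{f}$ is algebraic, $\widetilde{f}^*$ is a morphism of rational Hodge structures of weight $2$.

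Next, I would exploit two features special to K3 surfaces. The transcendental lattice $T(S)_\Q$ of a K3 surface is, by definition, the orthogonal complement of $\NS(S)_\Q$ in $H^2(S,\Q)$; it is the smallest sub-Hodge structure of $H^2(S,\Q)$ whose complexification contains the one-dimensional space $H^{2,0}(S)$, and it is irreducible as a rational Hodge structure. Since $\widetilde{f}$ is dominant, $\widetilde{f}^*$ is injective on $H^{2,0}$, and because both $H^{2,0}(S_2)$ and $H^{2,0}(\widetilde{S}_1)$ are one-dimensional the map $\widetilde{f}^* \colon H^{2,0}(S_2) \to H^{2,0}(\widetilde{S}_1)$ is an isomorphism. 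Using minimality at the target and irreducibility at the source, one deduces that $\widetilde{f}^*$ restricts to an isomorphism
\[
\widetilde{f}^* \colon T(S_2)_\Q \xrightarrow{\ \sim\ } T(\widetilde{S}_1)_\Q.
\]
Finally, blowing up a smooth surface at a point only enlarges the Néron-Severi group by the class of the exceptional divisor, so $T(\widetilde{S}_1) = T(S_1)$. Combining these identifications gives $\rank T(S_1) = \rank T(S_2)$, and since $\rho(S_i) = 22 - \rank T(S_i)$ for a K3 surface, we conclude $\rho(S_1) = \rho(S_2)$.

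The routine parts are the resolution of indeterminacy and the projection formula. The main point to justify carefully is the Hodge-theoretic step: that $T(S)_\Q$ is an irreducible rational sub-Hodge structure of $H^2(S,\Q)$ and that a morphism of Hodge structures which is an isomorphism on the $(2,0)$-piece must send transcendental lattice onto transcendental lattice. Once this is in hand, the equality of Picard numbers drops out immediately.
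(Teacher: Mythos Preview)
Your argument is correct and is essentially the standard transcendental proof: reduce to $\C$, resolve indeterminacy, and use the irreducibility of the rational Hodge structure on the transcendental lattice of a \K3 surface together with the minimality characterization of $T$ to obtain $T(S_1)_\Q \cong T(S_2)_\Q$, whence equal Picard numbers. The paper's own proof takes the same route but outsources all of the work: it simply cites Inose's original result over $\C$ (which is proved by precisely this kind of Hodge-theoretic argument) and then observes that $\NS_{\kbar}(S)=\NS_{\C}(S)$ to transfer the statement back to the number field. So you have not done anything different from the paper---you have just written out the transcendental step that the paper leaves as a black-box citation.
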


\begin{proof}
Inose proves this lemma for \K3 surfaces over $\C$ using a transcendental argument. Since we know that $\NS_{\C}(S)=\NS_{\kbar}(S)$, the conclusion is still valid for our case.
\end{proof}

Due to the transcendental nature of Inose's proof we do not have a very good understanding of how the N\'eron-Severi lattices of $S_{1}$ and $S_{2}$ are related. Thus, knowledge of an explicit set of divisors generating the Picard group of $S_1$, or of the Mordell-Weil group if $S_1$ is elliptically fibered, does not easily translate to the same for $S_2$.

Let $S$ be a Kummer surface associated with an abelian surface~$A$. Then, 
\[
\rank\NS_{\kbar}(S)=\rank\NS_{\kbar}(A)+16.
\]

\begin{proposition}[cf.~Birkenhake-Lang {\cite[Prop.~5.5.7]{Birkenhake-Lange}}]
Let $A$ be a simple abelian surface.  Then, the Picard number $\rho(A)=\rank\NS_{\kbar}(S)$ is given as follows.
\[
\begin{tabular}{cc}
\toprule
$\End_{\Q}(A)$ & $\rho(A)$ \\
\midrule 
$\Q$ & $1$ \\
real quadratic field & $2$ \\
indefinite quaternion algebra & $3$ \\
CM field of degree~$4$ & $2$ \\
\bottomrule
\end{tabular}
\]
\end{proposition}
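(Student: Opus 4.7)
The plan is to invoke the standard identification of the N\'eron-Severi group of a polarized abelian variety with the symmetric part of its endomorphism algebra under the Rosati involution. Fix a polarization $\phi_0:A\to A^\vee$ and let ${}'$ denote the associated Rosati involution on $\End^0(A):=\End(A)\otimes\Q$. The assignment $L\mapsto \phi_0^{-1}\circ\phi_L$ induces a $\Q$-vector space isomorphism between $\NS(A)\otimes\Q$ and the subspace $\End^0(A)^{\mathrm{sym}}=\{\alpha:\alpha'=\alpha\}$ (Birkenhake-Lange, Ch.~5). Hence $\rho(A)=\dim_\Q\End^0(A)^{\mathrm{sym}}$, and the proof reduces to computing this dimension in each of the four cases of Albert's classification that occur for a simple abelian surface.

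Three of the four cases are immediate. If $\End^0(A)=\Q$, the symmetric part is $\Q$, so $\rho(A)=1$. If $\End^0(A)=F$ is a real quadratic field, then any positive involution on a totally real field must be the identity, so $F^{\mathrm{sym}}=F$ and $\rho(A)=2$. If $\End^0(A)=K$ is a CM field of degree~$4$, then the unique positive involution on $K$ is complex conjugation, whose fixed subspace is the maximal totally real subfield $K_0$ of degree~$2$, giving $\rho(A)=2$.

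The substantive case is the indefinite quaternion algebra $D$ over $\Q$. Here the main involution $\alpha\mapsto\bar\alpha$ is \emph{not} positive (it is positive precisely in the definite case, which is excluded by our hypothesis), so the Rosati involution has the form $\alpha\mapsto a^{-1}\bar\alpha\,a$ for some pure quaternion $a$ with negative reduced norm. Writing out the fixed-point condition $\alpha\,a=a\,\bar\alpha$ on a $\Q$-basis $1,i,j,k$ of $D$ shows that the fixed subspace is three-dimensional. Equivalently, this is an involution of orthogonal type on a central simple algebra of degree~$2$, for which the general formula $n(n+1)/2=3$ for the dimension of the symmetric part applies; the symplectic type (which would occur for a definite quaternion algebra) would instead give dimension $n(n-1)/2=1$. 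Hence $\rho(A)=3$ in this case.

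The main obstacle is the quaternionic case: one must correctly identify which positive involution occurs and verify that it is of orthogonal, not symplectic, type, since the ``default'' involution $\bar{\phantom{a}}$ fails to be positive when $D$ is indefinite. Once this is pinned down, each line of the table follows by a direct dimension count, as is worked out in Birkenhake-Lange's Proposition~5.5.7 cited in the statement.
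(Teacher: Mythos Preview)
The paper does not supply a proof of this proposition; it is simply stated with a reference to Birkenhake--Lange, Prop.~5.5.7, as a known result to be used later. Your argument is essentially the proof given there: identify $\NS(A)\otimes\Q$ with the Rosati-fixed part of $\End^0(A)$ and run through Albert's classification. The three commutative cases are handled correctly, and in the quaternionic case you correctly isolate the key point, namely that for an \emph{indefinite} quaternion algebra the canonical involution is not positive, so the Rosati involution is a twist of it and is of orthogonal type, yielding a three-dimensional fixed space. So your proof is correct and coincides with the cited source; there is nothing further to compare against in the paper itself.
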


\begin{proposition}
Let $A$ be an abelian surface isogenous to the product of two elliptic curves $E_{1}\times E_{2}$.  Then, the Picard number $\rho(A)$ is given as follows.
\[
\begin{tabular}{lc}
\toprule
\hfil $A\sim E_{1}\times E_{2}$ & $\rho(A)$ \\
\midrule 
$E_{1}\not\sim E_{2}$ & $2$ \\
$E_{1}\sim E_{2}$, without CM & $3$ \\
$E_{1}\sim E_{2}$, with CM & $4$ \\
\bottomrule
\end{tabular}
\]
\end{proposition}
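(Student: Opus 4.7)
The plan is to reduce to the split case $A = E_1 \times E_2$ via isogeny invariance of $\rho$, and then apply the standard N\'eron--Severi decomposition for a product of elliptic curves.

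First I would observe that for abelian varieties over an algebraically closed field of characteristic zero, the Picard number is an isogeny invariant. One clean way to see this: after fixing a polarization, $\NS(A)\otimes\Q$ is identified with the subspace of $\End^{0}(A)$ fixed by the Rosati involution, and $\End^{0}$ is manifestly an isogeny invariant. (Alternatively, pullback along an isogeny $f\colon A_1\to A_2$ gives an injection $\NS(A_2)\otimes\Q\hookrightarrow\NS(A_1)\otimes\Q$, and using the dual isogeny gives the reverse inequality of ranks.) Hence we may assume $A = E_1 \times E_2$ from the outset.

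Second, I would invoke the classical decomposition
\[
\NS(E_1\times E_2) \;\cong\; \NS(E_1) \,\oplus\, \NS(E_2) \,\oplus\, \operatorname{Hom}(E_1,\hat{E}_2),
\]
which comes from the seesaw principle applied to the formula $\operatorname{Pic}(A\times B)\cong p_1^{*}\operatorname{Pic}(A)\oplus p_2^{*}\operatorname{Pic}(B)\oplus\operatorname{Hom}(A,\hat{B})$, after passing to N\'eron--Severi classes (this is in Birkenhake--Lange, already cited above). Since each elliptic curve has $\NS = \Z$ and is canonically isomorphic to its dual under the principal polarization, this collapses to
\[
\rho(E_1\times E_2) = 2 + \rank_{\Z}\operatorname{Hom}(E_1,E_2).
\]

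Third, it remains a routine case analysis of $\rank_{\Z}\operatorname{Hom}(E_1,E_2)$. If $E_1\not\sim E_2$ then $\operatorname{Hom}(E_1,E_2)=0$ and $\rho=2$. If $E_1\sim E_2$ and neither has CM, then choosing an isogeny identifies $\operatorname{Hom}(E_1,E_2)\otimes\Q$ with $\End^{0}(E_1)=\Q$, giving rank $1$ and $\rho=3$. If $E_1\sim E_2$ with CM, the same identification yields $\operatorname{Hom}(E_1,E_2)\otimes\Q\cong K$ for an imaginary quadratic field $K$, of $\Q$-dimension $2$, so $\rho=4$. There is no real obstacle: the only nontrivial ingredient is the standard $\NS$-decomposition of a product of abelian varieties, and everything else follows from the well-known classification of endomorphism algebras of elliptic curves in characteristic zero.
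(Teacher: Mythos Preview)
Your argument is correct and is the standard one. The paper, in fact, states this proposition without proof, treating it as well known (it immediately follows the analogous proposition for simple abelian surfaces, which is cited from Birkenhake--Lange, and then the paper moves on to \S3). Your reduction via isogeny invariance of $\rho$, the decomposition $\NS(E_1\times E_2)\cong\NS(E_1)\oplus\NS(E_2)\oplus\operatorname{Hom}(E_1,\hat E_2)$, and the case analysis of $\operatorname{Hom}(E_1,E_2)$ are all standard and valid; this is precisely the argument one would give if asked to supply a proof, and it is implicit in the Birkenhake--Lange reference the paper already uses for the companion proposition.
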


\section{Inose type surface for the Jacobian of a curve of genus~$2$}

Let $C$ be a curve of genus~$2$ given by the equation
\[
y^2 = f(x) = \sum_{i=0}^6 f_i x^i,
\]
where $f$ is a squarefree polynomial. In particular, we allow $f_{6}$ to be $0$ (but in that case $f_5 \neq 0$). In \cite{Kumar:2008} it is shown that a generic elliptic \K3 surface with $\II^*$ and $\III^*$ fibers has a Shioda-Inose structure such that the quotient is the Kummer surface of the Jacobian of a genus $2$ curve. In fact, this correspondence sets up a birational map between the moduli space of such elliptic \K3 surfaces and $\sM_2$, the moduli space of genus $2$ curves. It can also be viewed as an isomorphism between the moduli space of \K3 surfaces lattice polarized by $U \oplus E_8(-1) \oplus E_7(-1)$ and $\sA_2$, the moduli space of principally polarized abelian surfaces. For a genus $2$ curve $C$, we can describe the unique associated elliptic \K3 surface with $\II^*$ and $\III^*$ fibers as follows.

Let $(I_{2},I_{4},I_{6},I_{8})$ be the Igusa-Clebsch invariants (see for example Section 3.5 of \cite{Kumar:2014}).  Then the corresponding \K3 surface is given by the Weierstrass equation 
\begin{equation}\label{eq:II*-III*}
y^{2} =x^3 - t^{3}\Bigl(\frac{I_{4}}{12}t+1\Bigr)x
+t^{5}\Bigl(\frac{I_{10}}{4}t^2+\frac{I_{2}I_{4}-3I_{6}}{108}t+\frac{I_{2}}{24}\Bigr)
\end{equation}
It has a type $\II^*$ fiber at $t=\infty$ and a type $\III^*$ at $t=0$.  We say that $C$ is ``general'' if the other singular fibers are all $\I_1$.

The surface defined by \eqref{eq:II*-III*} is isomorphic to 
\[
y^{2} =x^3 - \Bigl(\frac{I_{4}}{12}+\frac{1}{t}\Bigr)x
+\Bigl(\frac{I_{10}}{4}t+\frac{I_{2}I_{4}-3I_{6}}{108}+\frac{I_{2}}{24t}\Bigr),
\]
which we call $G^{(1)}$, and define the base change
\[
G^{(n)}:y^{2} =x^3 - \Bigl(\frac{I_{4}}{12}+\frac{1}{t^{n}}\Bigr)x
+\Bigl(\frac{I_{10}}{4}t^{n}+\frac{I_{2}I_{4}-3I_{6}}{108}+\frac{I_{2}}{24t^{n}}\Bigr).
\]
Compare with the definition of $F^{(n)}$ in \cite{Shioda:Sphere-packings} and \cite{Kumar-Kuwata:singular}. Straightforward calculations show that the Kodaira-N\'eron model of $G^{(n)}$ is a \K3 surface for $n=1,\dots,4$.

In \cite{Kumar:2008} the map $G^{(1)}\to \Km(J(C))$ is given as a $2$-isogeny between two elliptic surfaces.  All Jacobian elliptic fibrations (elliptic fibrations with section) on $\Km(J(C))$ are classified in \cite{Kumar:2014}.  Fibration 13 in \cite{Kumar:2014} is 
\[
y^{2} = x^{3} - 108t^{4}\bigl(48t^{2} + I_{4}\bigr)x 
+ 108t^{4}\bigl(72I_{2}t^{4} + (4I_{4}I_{2} - 12I_{6})t^{2} + 27I_{10}\bigr),
\]
which is isomorphic to $G^{(2)}$ by $(x,y,t)\mapsto \big(t^2x/9,t^3y/27,1/(2t) \big)$.  Thus, $G^{(2)}$ is isomorphic to $\Km(J(C))$, and we have a Kummer sandwich diagram: $G^{(1)}\dashrightarrow G^{(2)}\simeq\Km(J(C))\dashrightarrow G^{(1)}$.

\begin{remark}
The surface $G^{(1)}$ can be realized as a quartic surface just as Inose's suface in \cite{Inose:singular-K3}:
\[
y^{2}zw - x^3z + \Bigl(\frac{I_{4}}{12}w+z\Bigr)xzw
-\Bigl(\frac{I_{10}}{4}w^{2}+\frac{I_{2}}{24}z^{2}\Bigr)w^{2}-\Bigl(\frac{I_{2}I_{4}-3I_{6}}{108}\Bigr)zw^{3}=0.
\]
Therefore we can regard $G^{(1)}$ as a generalization of Inose's surface to the case of Jacobian of the genus~$2$ curve~$C$. This point of view was also studied in \cite{Clingher-Doran}.
\end{remark}

\begin{theorem}\label{thm:G^(n)}
Suppose $C$ is general.  Then the rank of the Mordell-Weil group $G^{(n)}(\Qbar(t))$ is given by the table below.  Here, $\rho=\rho(J(C))$ is the Picard number of $J(C)$.
\[
\begin{tabular}{ccc}
\toprule
 & Singular fibers & Rank \\
\midrule 
$G^{(1)}$  & $\mathrm{II}^*, \mathrm{III}^{*}, 5\I_1$ & $\rho-1$ \\
$G^{(2)}$ & $\IV^*, \I_{0}^{*}, 10 \I_1$ & $4+\rho$ \\
$G^{(3)}$ & $\I_0^*, \mathrm{III}, 15 \I_1$ & $9+\rho$ \\
$G^{(4)}$ & $\IV, 20 \I_1$ & $12+\rho$ \\
\bottomrule
\end{tabular}
\]
\end{theorem}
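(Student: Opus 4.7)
The plan is to determine the singular fiber configuration of $G^{(n)}$ via Tate's algorithm, then extract the Mordell-Weil rank from the Shioda-Tate formula~\eqref{eq:Shioda-Tate} together with Proposition~\ref{prop:Inose}, which controls the Picard number.

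First I identify the singular fibers of $G^{(n)}$. Away from $t=0$ and $t=\infty$ the base change $t\mapsto t^{n}$ is \'etale, so each of the five $\I_{1}$ fibers of $G^{(1)}$, which are simple by the hypothesis that $C$ is general, pulls back to $n$ distinct $\I_{1}$ fibers of $G^{(n)}$, giving $5n$ in total. At $t=0$ the base change is totally ramified of degree~$n$; substituting $x\mapsto x/t^{2k}$, $y\mapsto y/t^{3k}$ with $k=\lceil n/4\rceil$ produces a minimal integral Weierstrass equation, and reading off $\bigl(v(a_{4}),v(a_{6}),v(\Delta)\bigr)$ together with the Kodaira classification identifies the fibers as $\III^{*},\I_{0}^{*},\III,\I_{0}$ for $n=1,2,3,4$ respectively. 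The analogous computation in the local coordinate $s=1/t$ at $t=\infty$ yields $\II^{*},\IV^{*},\I_{0}^{*},\IV$. Summing Euler numbers of the singular fibers gives $24$ in each case, confirming that $G^{(n)}$ is a K3 surface for $n\le 4$.

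Next I compute the Picard number. The degree-$n$ rational map $G^{(n)}\dashrightarrow G^{(1)}$ induced by $t\mapsto t^{n}$ is a finite rational map between K3 surfaces, so Proposition~\ref{prop:Inose} gives $\rho(G^{(n)})=\rho(G^{(1)})$. By the Shioda-Inose structure of \cite{Kumar:2008} there is also a degree-$2$ rational map $G^{(1)}\dashrightarrow\Km(J(C))$, so Proposition~\ref{prop:Inose} applied again gives $\rho(G^{(1)})=\rho(\Km(J(C)))=16+\rho$, using the Kummer identity $\rho(\Km(A))=\rho(A)+16$ recorded in the Preliminaries.

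Finally I apply the Shioda-Tate formula. With $m_{P}=9,8,7,5,3,2,1$ for $\II^{*},\III^{*},\IV^{*},\I_{0}^{*},\IV,\III,\I_{1}$ respectively, the contribution $\sum_{P}(m_{P}-1)$ equals $15,10,5,2$ for $n=1,2,3,4$, so
\[
\rank\MW_{\Qbar}\bigl(G^{(n)}\bigr) = (16+\rho) - 2 - \sum_{P}(m_{P}-1)
\]
evaluates to $\rho-1$, $\rho+4$, $\rho+9$, $\rho+12$ respectively, matching the table. The main obstacle is the Tate-algorithm minimization at $t=0$ and $t=\infty$ for each $n$, but since only four values are involved and the substitutions are explicit, this is a short and routine verification.
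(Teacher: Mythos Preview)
Your proposal is correct and follows essentially the same approach as the paper: Tate's algorithm for the singular fibers, Proposition~\ref{prop:Inose} to pin the Picard number at $16+\rho$, and then the Shioda--Tate formula to read off the rank. You simply give more explicit detail on the base-change analysis at $t=0,\infty$ and on the Euler-number bookkeeping than the paper's one-line summary.
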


\begin{proof}
Determination of the types of singular fibers is a straightforward application of Tate's algorithm.  By Proposition~\ref{prop:Inose}, the Picard number of each of the \K3 surfaces $G^{(1)},\dots, G^{(4)}$ is equal to that of $\Km(J(C))$, namely, $16+\rho$. Now, calculation of the rank is a straightforward application of the Shioda-Tate formula \eqref{eq:Shioda-Tate}.
\end{proof}

In order to obtain elliptic \K3 surfaces whose Mordell-Weil rank~$15$, it now suffices to find curves $C$ such that the Picard number of its Jacobian $J(C)$ is~$3$.  We give two different examples below.

\begin{remark}
As shown in \cite{Kumar:2014}, a set of five sections forms a basis of $G^{(2)}(\Qbar(t))$ if $\rho(J(C)) = 1$.  These sections are defined over the field $k(J(C)[2])$.
\end{remark}

\begin{example}[QM case] In the case where the Jacobian $J(C)$ is simple, $\rho(J(C))=3$ if and only if it has quaternionic multiplication.  Writing down an explicit equation of such a curve $C$ is a delicate number theoretical problem. Here, we use an example of \cite{Dieulefait-Rotger} which depends on a result of Hashimoto and Tsunogai \cite{Hashimoto-Tsunogai} (see also \cite{Hashimoto-Murabayashi}). Take the curve in \cite[Lemma~4.5]{Hashimoto-Tsunogai} and let $\sigma = \tau = \sqrt{-3/2}$.  Then we obtain 
\[
C:y^{2}= \Bigl(x^{2}+\frac{7}{2}\Bigr)
\Bigl(\frac{83}{30} {x}^{4}+ 14 x^{3}-{\frac {1519}{30}} x^{2}
+49 x-\frac {1813}{120}\Bigr).
\]
It has quaternionic multiplication by a maximal order in the quaternion algebra of discriminant $6$, which is the smallest possible discriminant for a non-split quaternion algebra.
Calculating Igusa-Clebsh invariants, and simplifying the coefficients by a change of variables, we obtain the elliptic surface
\[
G^{(4)}:Y^{2} = X^3+529200\Bigl(6-\frac{5}{t^{4}}\Bigr)X
-9261000\Bigl(4t^4+20-\frac{3431}{t^{4}}\Bigr).
\]
Simple calculations show that the singular fibers are irreducible except at $t=\infty$ where it has a fiber of type $\mathrm{IV}$.  This shows that the rank of $G^{(4)}(\Qbar(t))$ equals~$15$.

\end{example}
\begin{remark}
If we take the quadratic twist of $G^{(4)}$ by $\Q(\sqrt{2\cdot3\cdot5\cdot7})$, the elliptic surface becomes simpler:
\[
Y^{2}=X^3+12\Bigl(6-\frac{5}{t^{4}}\Bigr)X
-\Bigl(4t^4+20-\frac{3431}{t^{4}}\Bigr).
\]
However, it is possible that the field of definition of the Mordell-Weil group of the original surface may be easier to describe.
\end{remark}

\begin{example}[Split case] As a second example, we consider the case where $J(C)$ is isogenous to the product of two elliptic curves.  Since we want $\rho(J(C))$ to be equal to~$3$, we need to consider the case where $J(C)\sim E\times E$ with $E$ without complex multiplication.

Take two elliptic curves 
\begin{alignat*}{2}
E_{1}&: y^{2}+xy+y=x^{3}+4x-6 &&(\text{Cremona label 14a1}), \\
E_{2}&: y^{2}+xy+y=x^{3}-36x-70 &\quad &(\text{Cremona label 14a2}),
\end{alignat*}
These both have rational $6$-torsion points, and are $2$-isogenous to each other.  Moreover, we have $E_{1}[3]\simeq E_{2}[3]\simeq\Z/3\Z\times\mu_{3}$ as Galois modules. Take the quadratic twist $E_{2}^{(-3)}$ of $E_{2}$ by $-3$.  Then, the Galois isomorphism $E_{1}[3]\to E_{2}[3]$ isometric with respect to the Weil pairing induces an anti-isometry $E_{1}[3]\to E_{2}^{(-3)}[3]$. Therefore, by Frey-Kani \cite{Frey-Kani}, there exists a curve $C$ of genus~$2$ admitting morphisms $C\to E_{1}$ and $C\to E_{2}^{(-3)}$ of degree~$3$. To find an explicit equation, we use Shaska's result \cite[\S3]{Shaska}.  Using the fact that $j(E_{1})=(215/28)^3$ and $j(E_{2})=(1705/98)^3$, we find that the curve $C$ corresponds to the values $\mathfrak{u} = 107553525/12595352$, $\mathfrak{v} = -11619959625/1032401161$ in equation (8) in \cite{Shaska}.  After making some changes of variables, we obtain the following Weierstrass equation for $C$:
\[
C: y^2=-96393(13x+12)(7x-13)(107x^2-273x+252)(56x^2+104x+31).
\]
The maps $\phi_{1}:C\to E_{1}$ and $\phi_{2}:C\to E_{2}^{(-3)}$ are given by
\begin{align*}
\phi_{1}&:(x,y)\mapsto (x_{1},y_{1})=
\\&\qquad
\left(-\frac{156260 x^3+3627 x+86895}{3(7x-13)(56 x^2+104 x+31)},
-\frac{4(13x-6)(214 x^2+273 x+126)y}{9(7x-13)^2(56 x^2+104 x+31)^2}
\right),\\
\phi_{2}&:(x,y)\mapsto (x_{2},y_{2})=
\\&\qquad
\left(\frac{47485 x^3+4173 x^2-211380}{(13x+12)(107 x^2-273 x+252)},
-\frac{(12(7x+26))(14 x^2-52 x+31)y}{(13x+12)^2(107 x^2-273 x+252)^2}
\right),
\end{align*}
where we have used the following Weierstrass forms for $E_{1}$ and $E_{2}^{(-3)}$:
\[\setlength{\arraycolsep}{1pt}\begin{array}{lll}
E_{1}&: &\ y_{1}^{2}=x_{1}^{3}+5805 x_{1}-285714, \\
E_{2}^{(-3)}&:&\ y_{2}^{2}=x_{2}^{3}-5115 x_{2}+115414.
\end{array}\]
The elliptic \K3 surface $G^{(n)}$ corresponding to this $C$ is given by
\begin{multline*}
G^{(n)}:Y^2 = X^3+33\Bigl(2933005-\frac{1126255812}{t^{n}}\Bigr)X
\\
-2\Bigl(28449792t^{n}-8690133815-\frac{274280846290470}{t^{n}}\Bigr).
\end{multline*}
(Here, we have scaled $X$, $Y$, and $t$  differently from the definition of $G^{(n)}$ to clear denominators.)
It is readily verified that the singular fibers other than the $\mathrm{IV}$ fiber at $t=\infty$ are all irreducible.  Since $E_{1}$ and $E_{2}^{(-3)}$ are isogenous over $\Q(\sqrt{-3})$ and do not have complex multiplication, the rank of $G^{(4)}(\Qbar(t))$ is exactly~$15$.

\end{example}

\begin{remark}
The reader may wonder why we did not use a curve $C$ such that $J(C)$ is $(2,2)$-isogenous to the product $E_{1}\times E_{2}$.  The reason is that the resulting elliptic surface $G^{(1)}$ has at least one $\mathrm{I}_{2}$ fiber other than $\mathrm{II}^{*}$ and $\mathrm{III}^{*}$ fibers.  Thus, the rank of $G^{(4)}(\Qbar(t))$ cannot be~$15$.  Similarly, Example~1 in \cite[\S5]{Shaska} yields $G^{(1)}$ with two $\mathrm{I}_{2}$ fibers, and Example~2 yields one with a $\mathrm{I}_{3}$ fiber.
\end{remark}

\section{Fibration with two $\mathrm{IV}^{*}$ fibers}

A Kummer surface $\Km(J(C))$ can be realized as a complete intersection of three quadrics in $\P^{5}$.  It contains two sets of sixteen lines intersecting each other as shown in \cite[p.767, Figure~21]{Griffiths-Harris}. Classically, these sets are labeled {\em nodes} and {\em tropes}; in the model of the Kummer surface as a quartic surface in $\P^3$ with sixteen singular points, the (blown-down) nodes are the singular points, while the tropes correspond to the sixteen planes tangent to the Kummer surface along plane conics (which are the transforms of the tropes).

As indicated in Figure~1, we find a configuration of a pair of divisors of type $\mathrm{IV}^{*}$.  We thus have an elliptic fibration on $\Km(J(C))$ having these two $\mathrm{IV}^{*}$ fibers.  It turns out, however, that such an elliptic fibration does not have a section.  Thus, it is not in the list of elliptic fibrations studied in \cite{Kumar:2014}.

\begin{figure}[t]\label{fig:1}
\includegraphics[scale=1.3]{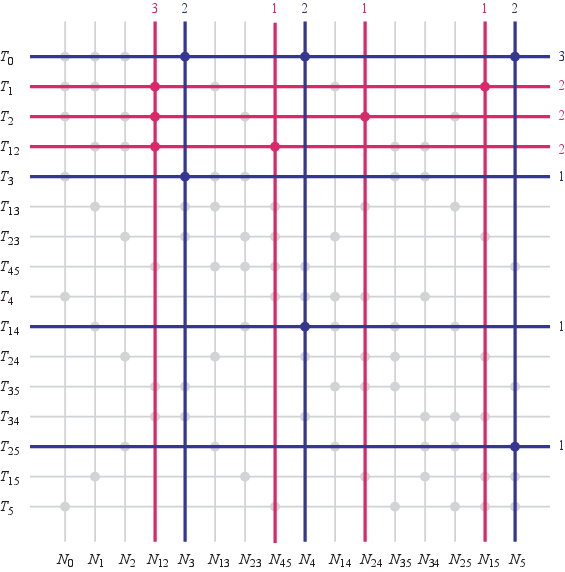}
\caption{Two $\mathrm{IV}^{*}$ fibers in the $16$--$6$ configuration of $\NS(\Km(J(C)))$}
\end{figure}

The elliptic divisor corresponding to the fibration is $F = 3T_0 + 2N_3 + 2N_4 + 2N_5 + T_3 + T_{14} + T_{25}$.

To obtain an explicit equation of this fibration, we start with Fibration~1 in \cite{Kumar:2014}, two $\I_0^*$ fibers and six $\I_2$ fibers, with Mordell-Weil group $\Z \oplus (\Z/2\Z)^2$. The zero section in \cite{Kumar:2014} is taken to be $T_2$. However, we will take $T_3$ to be the zero section; this does not affect the Weierstrass equation, however changes the identity components of the reducible fibers, for instance. We can now take a $2$-neighbor step to the elliptic fibration corresponding to the fiber $F' = T_3 + N_3 + T_0 + N_4 + T_{14} + N_{23}$ (this fiber class is in the orbit of fibration $10$ of \cite{Kumar:2014}). We omit the details of the neighbor step, apart from giving the elliptic parameter
\[
t_1 = \frac{1}{t-a}\left( \frac{y+y_s}{x-x_s} - 2a(b-1)(c-a)  \right)
\]
where
\[
(x_s, y_s) = \big(-4(a-1)(b-1)t(t-c)(ct-ab), -8(a-1)(b-1)(c-a)(c-b)t^2(t-ab)(t-c) \big)
\]
is the equation of the section $T_{14}$. The new elliptic fibration has Weierstrass equation. 
\begin{align*}
y_1^2 &= x_1\big(x_1^2 + x_1(t_1^4/16 + t_1^3(b-1)(c-a)/2 \\
    & \quad    + t_1^2(3b^2c^2-abc^2-4bc^2+2ac^2-4ab^2c-b^2c+2a^2bc \\
    & \quad     +3abc+2bc-4a^2c+2ac+2ab^2+2a^2b-4ab)/2 \\
    & \quad    + 2t_1(b-1)b(b-a)(c-1)c(c-a) + b^2(b-a)^2c^2(c-1)^2 ) \\
    & \quad  - (a-1)a(b-1)(c-a)(c-b)t_1^3(t_1+2bc-2b)(t_1+2bc-2ac)/2 \big)
\end{align*}
The fiber $F'$ is at $\infty$, while there is another fiber of type $\I_6$ at $t_1 = 0$, given by $T_1 + N_{12} + T_{12}+ N_{45} + T_5 + N_{15}$. There are also $\I_2$ fibers at $t_1 = -2(b-a)c$ and $t_1 = -2b(c-1)$. The first is $T_{25} + N'_{35}$ (in the notation of Fibration~$1$ of \cite{Kumar:2014}) and the second has $N_{24}$ as a component. We can now move by another $2$-neighbor step to the elliptic fibration with $F$ as a fiber. The elliptic parameter is
\[
t_2 = \frac{x_1}{t_1 + 2(b-a)c} ;
\]
we omit the remaining details.
The Jacobian of this new fibration is given by 
\[
Y^{2} = X^{3} + A X^{2} 
+ \Bigl(B_{1} T + B_{2} + \frac{B_{3}}{T}\Bigr)X
+\Bigl(C_{1} T + \frac{C_{2}}{T}\Bigr)^{2},
\]
where $T = t_2$ and
\[\setlength{\arraycolsep}{2pt}
\begin{array}{ccl}
A &= &4\bigl(abc^2+bc^2-2ac^2+ab^2c-2b^2c
-2a^2bc+bc+4a^2c-2ac+ab^2 -2a^2b\\
& & \quad +ab\bigr),\\
B_{1} &= &(a-1)c-a(b-1),\\
B_{2} &= &-16\bigl(ab^2c^4-a^2bc^4-abc^4+a^2c^4+a^2b^3c^3-ab^3c^3-b^3c^3-a^3b^2c^3\\
&& \qquad +2a^2b^2c^3-2ab^2c^3+b^2c^3+a^2bc^3+2abc^3-2a^2c^3-ab^4c^2+b^4c^2\\
&& \qquad -a^3b^3c^2-2a^2b^3c^2+6ab^3c^2-b^3c^2+a^4b^2c^2+a^3b^2c^2-3a^2b^2c^2\\
&& \qquad -2ab^2c^2+a^2bc^2-abc^2+a^2c^2+a^2b^4c-ab^4c+2a^3b^3c-2a^2b^3c-ab^3c\\
&& \qquad -2a^4b^2c+a^3b^2c+2a^2b^2c+ab^2c-a^2bc-a^3b^3+a^2b^3+a^4b^2-a^3b^2\bigr), \\
B_{3} &=& -ab^2c(a-1)(b-1)(c-1)^2(a-b)(b-c)(c-a)
\bigr((a-1)c-a(b-1)\bigr), \\
C_{1} &=& 1, \\
C_{2} &=& ab^2c(a-1)(b-1)(c-1)^2(a-b)(b-c)(c-a).
\end{array}
\]
It has $\IV^*$ fibers at $T=0$ and $\infty$.

Define
\[
H^{(n)}: Y^{2} = X^{3} + A X^{2} 
+ \Bigl(B_{1} t^{n} + B_{2} + \frac{B_{3}}{t^{n}}\Bigr)X
+\Bigl(C_{1} t^{n} + \frac{C_{2}}{t^{n}}\Bigr)^{2}.
\]
The Kodaira-N\'eron model of $H^{(n)}$ is a \K3 surface for $n=1,2,3$.

\begin{theorem}\label{thm:H^(n)}
Suppose $C$ is general.  Then the rank of the Mordell-Weil group $H^{(n)}(\Qbar(t))$ is given by the table below.  Here, $\rho=\rho(J(C))$ is the Picard number of $J(C)$.
\[
\begin{tabular}{ccc}
\toprule
 & Singular fibers & Rank \\
\midrule 
$H^{(1)}$  & $2\IV^*, 8\I_1$ & $2+\rho$ \\
$H^{(2)}$ & $2\IV, 12 \I_1$ & $10+\rho$ \\
$H^{(3)}$ & $24 \I_1$ & $14+\rho$ \\
\bottomrule
\end{tabular}
\]
\end{theorem}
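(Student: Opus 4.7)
The plan mirrors the proof of Theorem~\ref{thm:G^(n)}: identify the reducible fibers via Tate's algorithm, pin down the Picard numbers using Proposition~\ref{prop:Inose} (together with one auxiliary input), then read off the Mordell-Weil rank from the Shioda-Tate formula~\eqref{eq:Shioda-Tate}.

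For the fibers, I would run Tate's algorithm on the Weierstrass equation defining $H^{(n)}$. Since $H^{(n)}$ arises from $H^{(1)}$ by the cyclic base change $t\mapsto t^{n}$, totally ramified at the two points $t=0,\infty$ where $H^{(1)}$ carries $\IV^{*}$ fibers, the standard behavior of Kodaira types under such a ramified base change forces $\IV^{*}\mapsto\IV$ for $n=2$ and $\IV^{*}\mapsto$ smooth for $n=3$; away from $\{0,\infty\}$ each $\I_{1}$ fiber of $H^{(1)}$ is unramified and simply pulls back to $n$ copies of $\I_{1}$, provided $C$ is general enough that no accidental coincidence of discriminant roots occurs.

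For the Picard number I would show $\rho(H^{(n)})=16+\rho$ for $n=1,2,3$. The cases $n\ge 2$ are immediate from Proposition~\ref{prop:Inose} applied to the dominant rational map $H^{(n)}\dashrightarrow H^{(1)}$ of finite degree~$n$ furnished by the base change. The case $n=1$ is the subtle step and is the main obstacle of the proof: by construction $H^{(1)}$ is the Jacobian of a genus-one fibration on $\Km(J(C))$ that fails to admit a section, so Proposition~\ref{prop:Inose} does not directly relate $H^{(1)}$ to $\Km(J(C))$. Instead I would invoke the standard fact that a K3 surface equipped with a genus-one fibration and its associated Jacobian fibration share the same transcendental lattice (they are twisted Fourier-Mukai partners via a class in the Brauer group of the Jacobian), which gives $\rho(H^{(1)})=\rho(\Km(J(C)))=16+\rho$.

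Plugging into \eqref{eq:Shioda-Tate}, a fiber of type $\IV^{*}$ contributes $m_{P}-1=6$, a $\IV$ contributes $2$, and an $\I_{1}$ contributes $0$; the total $\sum_{P}(m_{P}-1)$ therefore equals $12$, $4$, and $0$ for $n=1,2,3$ respectively. Subtracting from $\rho(H^{(n)})-2 = 14+\rho$ yields the Mordell-Weil ranks $2+\rho$, $10+\rho$, and $14+\rho$ claimed in the table.
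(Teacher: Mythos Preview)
Your argument is correct and closely follows the paper's outline: Tate's algorithm for the fibers, the base change $H^{(n)}\dashrightarrow H^{(1)}$ combined with Proposition~\ref{prop:Inose} for $n\ge 2$, and the Shioda--Tate formula to finish. The only divergence is at the case $n=1$. You assert that Proposition~\ref{prop:Inose} does not directly link $H^{(1)}$ to $\Km(J(C))$, and you instead appeal to the equality of transcendental lattices between a genus-one fibered \K3\ surface and its Jacobian via twisted Fourier--Mukai partners. The paper is more direct: since $H^{(1)}$ is the Jacobian of a genus-one fibration on $\Km(J(C))$, there \emph{is} a rational map of finite degree between the two surfaces, and Proposition~\ref{prop:Inose} applies immediately. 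Concretely, any multisection $D$ of degree $m$ for the genus-one fibration yields, fiberwise, the map $P\mapsto[mP-D|_{X_t}]\in\operatorname{Pic}^0(X_t)$, hence a dominant rational map $\Km(J(C))\dashrightarrow H^{(1)}$ of degree $m^{2}$; either direction suffices for Inose's proposition. Your transcendental-lattice route reaches the same conclusion but imports heavier machinery than necessary; the paper stays entirely within the framework already set up.
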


\begin{proof}
Since $H^{(1)}$ is the Jacobian of an elliptic fibration on $\Km(J(C))$, there is a rational map $H^{(1)}\dashrightarrow \Km(J(C))$ of finite degree.  Thus, by Proposition~\ref{prop:Inose}, the Picard number of $H^{(1)}$ is equal to $16+\rho$.  The rest of the proof is similar to that of Theorem~\ref{thm:G^(n)}.
\end{proof}

Thus, $H^{(3)}$ for any simple abelian surface without extra endomorphisms gives an elliptic \K3 surface of rank~$15$.  However, although a very general abelian surface (in the moduli space of principally polarized abelian surfaces) has this property, and we expect a ``random'' numerical example to have $\rho = 1$, we must verify it for any putative example explicitly. To do so, we first use Lepr\'evost's criterion \cite[Lemme~3.1.2]{Leprevost}: a genus $2$ curve $C$ over $\Q$ has simple Jacobian if there is a prime~$\ell$ of good reduction such that the Galois group of the characteristic polynomial of Frobenius of the reduced curve is the 8-element dihedral group $D_{4}$. We can compute the characteristic polynomial of Frobenius in {\tt magma} or {\tt sage}; the latter has an optimized implementation of the $p$-adic Harvey-Kedlaya algorithm for computing the matrix of Frobenius for a curve over a finite field. 

\begin{example} \label{E6E6example}
Let $a = -1$, $b = 1/7$, and $c = -6/7$.  Then, we can verify that the Jacobian $J(C)$ of the curve
\[
C:y^{2} = x(x-1)(x+1)
\Bigl(x-\frac{1}{7}\Bigr)\Bigl(x+\frac{6}{7}\Bigr)
\]
is simple and $\rho(J(C))=1$ by the lemma below.
After a certain change of variables, we have
\[
H^{(n)}:y^{2}=x^3-\frac{1354}{7}x^2
+\frac{936}{7}\Bigl(t^{n}+\frac{42989}{819}+\frac{4}{t^{n}}\Bigr)x
+\frac{6084}{49}\Bigl(t^{n}-\frac{4}{t^{n}}\Bigr)^2.
\]
The Mordell-Weil rank of $H^{(3)}$ is~$15$.
\end{example}

\begin{lemma}
Let $C$ be the curve of Example \ref{E6E6example}. Then $\rho(J(C)) = 1$.
\end{lemma}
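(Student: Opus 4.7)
The plan is to combine Lepr\'evost's criterion (recalled just above) with elementary constraints on the Frobenius polynomials at a few primes of good reduction. The discriminant of $f(x) = x(x-1)(x+1)(x-\tfrac{1}{7})(x+\tfrac{6}{7})$ is supported on $\{2,3,7\}$, so every prime $\ell \notin \{2,3,7\}$ gives good reduction for $C$. For such $\ell$ we compute the Frobenius polynomial
\[
P_{\ell}(T) \;=\; T^{4} - a_{1,\ell}\, T^{3} + a_{2,\ell}\, T^{2} - a_{1,\ell}\,\ell\, T + \ell^{2},
\]
either by point-counting $\bar C(\F_\ell)$ and $\bar C(\F_{\ell^{2}})$, or by invoking the Harvey--Kedlaya $p$-adic algorithm implemented in \texttt{sage}. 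The goal is to produce two small good primes $\ell_{0},\ell_{1}$ such that $P_{\ell_{0}}$ and $P_{\ell_{1}}$ are both irreducible over $\Q$, the Galois group of the splitting field of $P_{\ell_{0}}$ is the full wreath product $D_{4} = S_{2}\wr S_{2}$ of order~$8$, and the squarefree parts of the auxiliary quantities
\[
D_{\ell_{i}} \;:=\; a_{1,\ell_{i}}^{2} - 4\,a_{2,\ell_{i}} + 8\,\ell_{i} \qquad (i = 0,1)
\]
are distinct.

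By Lepr\'evost's criterion \cite[Lemme~3.1.2]{Leprevost}, the $D_{4}$ condition at $\ell_{0}$ already forces $J(C)$ to be absolutely simple, so only the three nontrivial cases of the classification in \S2 remain: $\End^{0}_{\Qbar}(J(C))$ could be a real quadratic field (RM), an indefinite quaternion algebra $B$ (QM), or a quartic CM field $K$. The QM case is excluded because $P_{\ell}$ is a perfect square $Q_{\ell}^{2}$ at every prime $\ell$ unramified in~$B$, and any given $B$ has only finitely many ramified primes, so irreducibility of $P_{\ell_{0}}$ and $P_{\ell_{1}}$ (augmented by one more prime if necessary to avoid the finitely many ramified ones) is incompatible with QM. Both the RM and quartic-CM cases force the existence of a real quadratic subfield $F \subset \End^{0}_{\Qbar}(J(C))$; a short calculation on the $\ell$-adic Tate module shows that $P_{\ell}$ then factors over $F$ as $(T^{2} - \beta T + \ell)(T^{2} - \beta^{\sigma} T + \ell)$ with $\beta \in \mathcal{O}_{F}$ and $\sigma$ the nontrivial automorphism of $F$. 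Expanding and comparing coefficients yields
\[
D_{\ell} \;=\; (\beta - \beta^{\sigma})^{2} \;\in\; \operatorname{disc}(F) \cdot (\Q^{\times})^{2},
\]
so $\operatorname{sqfree}(D_{\ell}) = \operatorname{sqfree}(\operatorname{disc}(F))$ \emph{independently of}~$\ell$. The inequality $\operatorname{sqfree}(D_{\ell_{0}}) \neq \operatorname{sqfree}(D_{\ell_{1}})$ therefore rules out both RM and CM, and we conclude $\End^{0}_{\Qbar}(J(C)) = \Q$, hence $\rho(J(C)) = 1$ by the classification.

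The only substantive obstacle is finding a witness prime $\ell_{0}$ realizing the full $D_{4}$ Galois group. Under the Sato--Tate heuristic a positive density of good primes do so for a ``generic'' genus-$2$ curve, so a search through $\ell = 5, 11, 13, 17, 19, 23, \ldots$ should terminate after a handful of attempts. Once $\ell_{0}$ and $\ell_{1}$ are in hand, the remaining verifications --- irreducibility, the Galois group computation, and the squarefree-part comparison --- are immediate arithmetic on the computed integer coefficients $a_{1,\ell}, a_{2,\ell}$.
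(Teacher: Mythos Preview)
Your strategy is in the same spirit as the paper's, but there are two real problems.

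\textbf{No witnesses are produced.} The lemma is about one specific curve, so the proof must exhibit actual primes and verify the stated conditions. Saying a search ``should terminate after a handful of attempts'' is not a proof. The paper simply takes $\ell=37$ and $\ell=41$, writes down
\[
p_{37}=x^4-4x^3+46x^2-148x+1369,\qquad p_{41}=x^4+4x^3+6x^2+164x+1681,
\]
checks that $p_{37}$ has Galois group $D_4$ (Lepr\'evost gives absolute simplicity), and then finishes as below.

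\textbf{Field of definition of the extra endomorphisms.} Your arguments for excluding QM, RM and CM all assume that the putative extra endomorphisms of $J(C)_{\Qbar}$ commute with $\mathrm{Frob}_\ell$. In the QM case you assert $P_\ell=Q_\ell^2$; this is only forced when the quaternionic order acts over $\Q$. If the QM is defined only over a quadratic extension $K/\Q$, then at primes inert in $K$ the Frobenius conjugates the quaternionic action and $P_\ell$ can very well be irreducible. Likewise, your factorisation $P_\ell=(T^2-\beta T+\ell)(T^2-\beta^{\sigma}T+\ell)$ over $F$ comes from viewing $V_\ell$ as an $F$-module on which $\mathrm{Frob}_\ell$ is $F$-linear; if the real multiplication is not defined over $\Q$ this $F$-linearity fails at infinitely many primes, and the ``short calculation on the Tate module'' does not go through as stated.

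The paper sidesteps all of this with a single clean step: for any prime $\ell$ of good reduction one has an injection
\[
\End_{\Qbar}(A)\;\hookrightarrow\;\End_{\overline{\F_\ell}}(A_{\F_\ell}),
\]
and when $P_\ell$ is irreducible Tate's theorem identifies the right-hand side (after $\otimes\,\Q$) with the quartic number field $K_\ell=\Q[\pi_\ell]$. Hence $\End^0_{\Qbar}(A)$ is a subfield of both $K_{37}$ and $K_{41}$; since these two quartic fields are linearly disjoint, that common subfield is $\Q$. Note that your invariant $D_\ell$ is just the discriminant of the totally real quadratic subfield $K_\ell^{+}\subset K_\ell$, so your criterion $\operatorname{sqfree}(D_{\ell_0})\neq\operatorname{sqfree}(D_{\ell_1})$ is exactly the ``real part'' of the paper's linear-disjointness check --- but it is the reduction-and-Tate argument, not the Tate-module factorisation, that justifies it without any hypothesis on where the endomorphisms are defined.
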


\begin{proof}
The characteristic polynomial of $\Frob_{37}$ and $\Frob_{41}$ on $A = J(C)$ are given by 
\begin{align*}
p_{37} &= x^4 - 4x^3 + 46x^2 - 148x + 1369 \\
p_{41} &= x^4 + 4x^3 + 6x^2 + 164x + 1681 .
\end{align*}
We can directly verify (for instance in {\tt gp/PARI} or{ \tt Magma}) that $p_{37}$ has Galois group $D_4$. Therefore, the Jacobian is simple. Next, since $p_{37}$ and $p_{41}$ are irreducible, it follows by \cite[Theorem 2]{Tate} that the endomorphism rings of the reductions $A \otimes \F_{37}$ and $A \otimes \F_{41}$ are orders in quartic fields defined by $p_{37}$ and $p_{41}$. These fields are easily checked to be linearly disjoint. Since $\End(A) \otimes \Q$ is a subfield of each, it must be $\Q$, and $\End(A)$ must be $\Z$.
\end{proof}

\end{document}